\newcommand{\PP}{\mathbb{P}}
\newcommand{\ZZ}{\mathbb Z}
\newcommand{\OO}{\mathcal O}
\newcommand{\OC}{\mathcal C}
\newcommand{\ra}{\rightarrow}
\newcommand{\Spec}{\mathrm{Spec}}
\newcommand{\twopartdef}[4]
{
	\left\{
		\begin{array}{ll}
			#1 & \mbox{if } #2 \\
			#3 & \mbox{if } #4
		\end{array}
	\right.
}
\newtheorem{theorem}{Theorem}[section]
\newtheorem{lemma}[theorem]{Lemma}
\newtheorem{proposition}[theorem]{Proposition}
\theoremstyle{definition}
\newtheorem{question}[theorem]{Question}
\newtheorem*{remark}{Remark}
\newtheorem{example}[theorem]{Example}
\title{Vector bundles on trees of smooth rational curves}
\author{Geoffrey Smith}
\begin{document}
\maketitle
\begin{abstract}
Given a vector bundle $E$ on a tree of smooth rational curves $C$, we give necessary and sufficient conditions for a vector bundle $E'$ on $\PP^1$ to specialize to $E$ on $C$, generalizing the rank 2 case, due to Coskun.    
\end{abstract}
\section{Introduction}\label{introduction}
Let $C$ be a connected nodal curve of arithmetic genus 0 defined over an algebraically closed field $k$. $C$ admits a simple geometric description, as its irreducible components are each isomorphic to $\PP^1$, and its dual graph is a tree; in what follows, we will refer to it as a \emph{tree of smooth rational curves}, following the convention of \cite[II.7.4]{Kol96}. Likewise, vector bundles on $C$ are relatively easy to describe explicitly. Given the normalization $\tilde{C}$ of $C$, a vector bundle on $C$ is specified by a vector bundle on $\tilde{C}$ and the choice of gluing data on the nodes of $C$. That said, it is much less clear what the vector bundles on \emph{families} of trees of smooth rational curves can look like.

In this paper, we investigate the behavior of vector bundles on trees of smooth rational curves. For the most part, we work in the setting of a family of curves $\OC$ over a base scheme $\Delta=\Spec(k[[t]])$. In this setting, the main question we address is the following.

\begin{question}\label{basicQuestion}
Let $C$ and $C'$ be trees of smooth rational curves over $k$, and let $E$ and $E'$ be vector bundles on $C$ and $C'$ respectively. Under what circumstances does $E'$ specialize to $E$? That is, when is there a flat family $\pi:\mathcal{C}\ra \Delta$, where $\Delta:=\Spec(k[[t]])$, and a vector bundle $\mathcal{E}$ on $\OC$ such that $\OC_0\cong C$ and $\mathcal E\vert_{\OC_0}\cong E$, while  $\OC_\eta\cong C\times_k K$ and $\mathcal E\vert_{\OC_\eta}\cong E$ for $\eta$ the generic point?
\end{question}

Coskun considered a similar question, and in \cite[Section 4]{Cos06}  gave conditions under which a surface scroll specializes to a given  tree of scrolls. This closely corresponds with the rank two case of this question. Likewise, Ran studied a variant of this question in \cite{Ran20}, and gives for instance criteria for a vector bundle $E$ on a tree $C$ to deform only to a balanced bundle on $\PP^1$. 

The value of this question largely lies in the theory of jumping curves. Given a vector bundle $E$ on a projective variety $X\subset \PP^n$, in many cases a key step in understanding the behavior of $E$ involves understanding the restriction of $E$ to lines contained in $X$; in this context, a \emph{jumping line} is a line $\ell$ where the splitting type of $E\vert_\ell$ is more special than at the general point. There is a more extensive discussion of jumping lines and their applications to vector bundles on projective space in \cite{OSS11}.

The theory of jumping lines has been partially extended to higher degree curves on varieties, in, for instance, \cite{Cos08}. One obstruction present in the higher-degree case, however, is the fact that rational curves can degenerate to reducible curves; in this setting, an answer to Question \ref{basicQuestion} clarifies what sorts of jumping curve can exist.

It is  clear that $E$ and $E'$ must have the same degree and rank for $E'$ to specialize to $E$. Beyond this, the best-known obstruction to $E'$ specializing to $E$ is a failure of the upper semicontinuity condition for cohomology of coherent sheaves. Every line bundle of degree $d$ on $C$ deforms to $\OO(d)$ on $\PP^1$, so the upper semicontinuity condition \cite[Theorem 12.8]{Har77} states that if $E'$ specializes to $E$, then the inequalities 
\begin{equation}\label{semicontinuity}
h^0(C, E\otimes L)\geq h^0(\PP^1, E'(\deg L))
\end{equation}
and
\begin{equation}\label{h1semicontinuity}
h^1(C, E\otimes L)\geq h^1(\PP^1, E'(\deg L))
\end{equation}
hold for all line bundles $L$ on $C$. These inequalities are in fact equivalent to each other, because the flatness of $\mathcal{E}$ over $\Delta$ implies $\chi(C,E\otimes L)=\chi(\PP^1, E'(\deg L))$.

In the case $C\cong C'\cong \PP^1$, it is well-established that these are the only obstructions. By the Birkhoff-Grothendieck theorem, 
vector bundles of degree $d$ and rank $r$ on $\PP^1$ are in bijection with \emph{splitting types}, which are collections of integers 
$
(d_1,\ldots,d_r)
$ 
with $d_1\geq d_2\geq \dots \geq d_r$ and $\sum_{1\leq i\leq r}d_i=d$. In this case, the semicontinuity condition \ref{semicontinuity} reduces to statement that if the splitting type $(d_1',\ldots, d_r')$ specializes to $(d_1,\ldots,d_r)$, then we have 
\[
\sum_{1\leq i\leq k}d_i'\leq \sum_{1\leq i\leq k} d_i
\]
for all $k$. By Theorem 14.7 of \cite{EH16},
  this semicontinuity condition is  sufficient for a vector bundle with splitting type $(d_1',\ldots, d_r')$ to specialize to one with splitting type $(d_1,\ldots,d_r)$.

We will analogously show that these are the only obstructions for any $C$ if we have $C'\cong \PP^1$.
\begin{theorem}\label{treesMain}
In the setup of Question \ref{basicQuestion}, a vector bundle $E'$ on $C'\cong \PP^1$ specializes to $E$ if and only if $E$ and $E'$ have the same degree and rank and the semicontinuity condition (\ref{semicontinuity}) holds for all line bundles on $C$.
\end{theorem}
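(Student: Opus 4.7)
The necessity of the semicontinuity condition follows from upper semicontinuity, as sketched in the introduction; the real content is sufficiency. I would prove it by induction on the number $n$ of irreducible components of $C$. The base case $n=1$, where $C\cong\PP^1$, is Theorem 14.7 of \cite{EH16}.

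For the inductive step, I would choose a leaf component $C_0\cong\PP^1$ of the dual tree of $C$, meeting the remaining subtree $C_*$ (with $n-1$ components) at a single node $p$; write $E_0=E|_{C_0}$, $E_*=E|_{C_*}$, and let $\phi$ denote the gluing isomorphism of fibers at $p$. The plan is a two-stage construction. First, build a flat family $\mathcal C_1\to\Delta$ whose special fiber is $C$, whose generic fiber is the two-component nodal curve $C_*\cup_{p'}\PP^1$, and a vector bundle $\mathcal E_1$ extending $E$ whose restriction to the new $\PP^1$ component has a prescribed splitting type. Concretely, the total space near $p$ is $\Spec\,k[[t]][u,v]/(uv-t)$, and one works with a $t$-adic lift of the gluing $\phi$; varying this lift controls the splitting type available on the smoothed leaf. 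Second, apply the inductive hypothesis to $C_*$ with the bundle $\mathcal E_{1,\eta}|_{C_*}$ to degenerate $C_*$ into a single $\PP^1$, and finally smooth the last remaining node to reach an irreducible $\PP^1$ carrying a bundle of splitting type $(d_1',\dots,d_r')$.

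The main obstacle will be the bookkeeping that transports the semicontinuity hypothesis on $(C,E)$ through these intermediate stages. Given a target $(d_i')$ satisfying the hypothesis relative to $(C,E)$, I must show that the splitting type of the new $\PP^1$ in stage one can be chosen so that (a) there is an intermediate splitting type for $\mathcal E_{1,\eta}|_{C_*}$ for which the semicontinuity hypothesis relative to $(C_*,E_*)$ holds, enabling the induction, and (b) smoothing the final node recovers exactly $(d_i')$. Line bundles on the partially smoothed generic fiber are determined by a line bundle on $C_*$ together with an integer, but the cohomology of $\mathcal E_{1,\eta}\otimes L$ couples these via the gluing at $p'$; the analysis likely runs through the Mayer--Vietoris sequence for the partial normalization. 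Identifying a sufficient family of test line bundles on $C$ to propagate the needed inequalities through each stage, and navigating the combinatorics of which splitting types on $\PP^1$ can arise from a given bundle on a two-component tree with prescribed gluing, is where I expect the bulk of the real work to lie. A clean formulation would extract a standalone lemma describing exactly the range of splitting types achievable by smoothing a single node, from which the inductive step would follow mechanically.
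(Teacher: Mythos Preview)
Your plan differs structurally from the paper's: you induct on the number of components of $C$, peeling off a leaf and reducing to a ``single-node smoothing'' lemma; the paper instead inducts on the \emph{rank} of $E$, building $\mathcal E$ as an iterated extension of line bundles on a family with regular total space (Theorem~\ref{treesMainEnlarged}), then blowing down and composing specializations via Proposition~\ref{specializationsCompose}.

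The genuine gap is that your standalone lemma---describing which splitting types on $\PP^1$ arise by smoothing a single node of a two-component tree---is not a subsidiary step but carries essentially the full difficulty of the theorem, and your proposed mechanism for it fails. You write that the total space near the node is $\Spec k[[t]][u,v]/(uv-t)$, i.e.\ a \emph{regular} surface. But Example~\ref{keyExample} together with the subsequent Remark (Deopurkar's observation) shows this is insufficient already for $n=2$: with $E=\OO(2,0)\oplus\OO(0,2)$ and $E'=\OO(1)\oplus\OO(3)$, semicontinuity is satisfied, yet over any regular $\OC$ the summand $\OO(3)\subset E'$ on the generic fiber would extend to a degree-$3$ line subbundle of $E$, which does not exist. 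So no amount of varying the $t$-adic gluing on the regular model produces this specialization; the total space must acquire a singularity at the node.

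What the paper supplies, and your proposal lacks, is a device to manufacture the missing line subbundle: the \emph{enlargement} $f:C'\to C$ of Lemma~\ref{treesMainLemma}, which inserts extra $\PP^1$'s so that $f^*E$ does admit a line subbundle of the required degree. The construction is then carried out on a regular smoothing of $C'$, and the enlargement is contracted at the end (producing exactly the singular $\OC$ one needs). Without this idea or a substitute for it, your inductive scheme stalls at its own base case $n=2$. Even granting the single-node lemma as a black box, the bookkeeping you flag---tracking the gluing at the surviving node through the smoothing of $C_*$, where different gluings yield non-isomorphic bundles on $C_0\cup_{p'}\PP^1$---would still need to be made precise; but the first issue is the decisive one.
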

To prove Theorem \ref{treesMain}, we ideally would construct the desired vector bundle on the family $\OC$ as an iterated extension of line bundles on $\OC$. However, this is not always possible, as the following example illustrates.
\begin{example}\label{keyExample}
Let $C$ be the union of two smooth projective curves intersecting at a node. Let $\OO(a,b)$ denote the line bundle on $C$ that has degree $a$ on the first $\PP^1$ and degree $b$ on the second. Then the vector bundle $E:=\OO(2,0)\oplus \OO(0,2)$ is allowed by semicontinuity to be the limit of the vector bundle $E':=\OO(1)\oplus \OO(3)$, but $E$ has no line subbundle of degree 3.
\end{example}
To overcome this obstacle, in the proof of Theorem \ref{treesMain} we will replace $C$ with a larger tree of smooth rational curves $C'$, such that the desired vector bundle $\mathcal{E}$ on a family $\OC'$ with special fiber $C'$ can be constructed as a deformation of an iterated extension of line bundles. We then produce the desired specializations by blowing down $\OC'$.

\begin{remark}
Deopurkar \cite{Deo17} notes that in the example above, the fact that $E$ has no line subbundle of degree $3$ means that $E'$ cannot specialize to $E$ if we impose the additional condition that the total space $\OC$ is regular. Indeed, the construction we use to prove Theorem \ref{treesMain} in this case produces a surface $\OC$ with an ordinary double point at the node of $C$.
\end{remark}

This paper is organized as follows. In Section \ref{stacksSection} we construct the moduli stacks of vector bundles on prestable curves of genus $g$ and establish their basic properties, in the process establishing that specializations of vector bundles compose (Proposition \ref{specializationsCompose}), and establish Theorem \ref{treesMain} for line bundles. Then, in section \ref{treesMainTheoremProof}, we prove Theorem \ref{treesMain}.
\subsection*{Acknowledgements}
We thank Izzet Coskun, Joe Harris, and Anand Deopurkar for helpful conversations, and Dori Bejleri for his helpful comments on a draft of this article.
\section{Preliminaries}\label{stacksSection}
\subsection{The moduli stack of vector bundles on trees of smooth rational curves}
Throughout, stacks will be stacks on $\mathit{Aff}_k$ with the fppf topology, where $k$ is an algebraically closed field, following the convention of \cite{dJHS11}, on which we rely.

Recall that a \emph{prestable} curve is a nodal connected curve. Let $\mathit{Curves}_{ps,g}$ denote the moduli stack of prestable curves of arithmetic genus $g$, in the sense of \cite[Definition 0E6T]{Stacks}. This stack is algebraic and smooth and is an open substack of the moduli stack of all genus $g$ curves (\cite[0E6U, 0E6W]{Stacks}). In addition, we will require the fact that $\mathit{Curves}_{ps,g}$ is quasi-separated and locally of finite presentation over $k$, which follows from \cite[Lemma 0DSS]{Stacks}.

Let $X_{g,r}$ be the stack over $k$ of pairs $C,E$, where $C$ is a connected nodal curve of arithmetic genus $g$ and $E$ is a vector bundle on $C$ of rank $r$. More precisely, $X_{g,r}$ is the fibered category over $\mathit{Aff}_k$ with objects over the scheme $S/k$ given by $(\pi:\mathcal{C}\ra S, E)$ with $\pi$ a flat morphism from an algebraic space $\mathcal{C}$ which is proper, locally of finite presentation, and nodal of relative dimension 1 with genus $g$ fibers such that $\pi_*(\OO_\OC)\cong\OO_S$, and $E$ is a vector bundle on $\OC$, and with morphisms coming from the  obvious maps of pairs. This stack admits a map 
\[
F:X_{g,r}\ra \mathit{Curves}_{ps,g}
\]
forgetting the data of the vector bundle. We have the following.
\begin{proposition}[\cite{dJHS11}, Proposition 3.6]
$X_{g,r}$ is an algebraic stack with locally finitely presented, separated diagonal.
\end{proposition}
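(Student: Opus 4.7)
My plan is to reduce the proposition to two facts: the algebraicity of the base $\mathit{Curves}_{ps,g}$, recorded in the excerpt, and the algebraicity of the fibers of the forgetful morphism $F: X_{g,r} \to \mathit{Curves}_{ps,g}$, which are stacks of vector bundles on a fixed family of prestable curves.

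First I would show that $F$ is representable by algebraic stacks. Given a scheme $S$ with a map $S \to \mathit{Curves}_{ps,g}$ corresponding to a family $\pi: \mathcal{C} \to S$, the fiber product $X_{g,r} \times_{\mathit{Curves}_{ps,g}} S$ is the relative moduli stack $\mathrm{Bun}_r(\mathcal{C}/S)$ of rank $r$ vector bundles on $\mathcal{C}/S$. Stratifying by the locally constant degree and invoking standard moduli-of-sheaves technology (Quot schemes after a sufficiently positive twist on an \'etale atlas, or more intrinsically the general theorem on the stack of coherent sheaves on a proper flat morphism), each stratum is an algebraic stack. Since $\mathit{Curves}_{ps,g}$ is algebraic, so is $X_{g,r}$.

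Next I would analyze the diagonal. Over a test scheme $S$ it parameterizes isomorphisms $(\mathcal{C}_1, E_1) \xrightarrow{\sim} (\mathcal{C}_2, E_2)$ of pairs, and factors as
\[
\mathrm{Isom}\bigl((\mathcal{C}_1, E_1),(\mathcal{C}_2, E_2)\bigr) \longrightarrow \mathrm{Isom}_S(\mathcal{C}_1, \mathcal{C}_2) \longrightarrow S.
\]
The second map is pulled back from the diagonal of $\mathit{Curves}_{ps,g}$ and hence is separated and locally of finite presentation. For the first, pulling back to a scheme $T \to \mathrm{Isom}_S(\mathcal{C}_1, \mathcal{C}_2)$ carrying a universal curve isomorphism $\phi$ reduces the problem to representing $\mathrm{Isom}_T(E_1|_T, \phi^* E_2|_T)$, which is an open subscheme of the affine $T$-scheme associated to the coherent pushforward $(\pi_T)_* \mathcal{H}om(E_1|_T, \phi^* E_2|_T)$, hence separated and of finite presentation over $T$. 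Composing, the diagonal of $X_{g,r}$ is separated and locally of finite presentation.

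The main obstacle is the algebraicity of the fiber stacks $\mathrm{Bun}_r(\mathcal{C}/S)$. The family $\mathcal{C}$ is only required to be an algebraic space, and no globally defined relatively ample line bundle is assumed, so the classical projective Quot construction does not apply verbatim. The fix is to work \'etale locally on $S$, where a relatively ample line bundle exists, construct $\mathrm{Bun}_r$ as a union over choices of twist of open substacks of suitable Quot schemes, and descend; this is precisely the content of the reference the paper relies on.
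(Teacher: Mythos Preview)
Your proposal is correct and follows the same overall architecture as the paper's proof: establish that the forgetful map $F$ to $\mathit{Curves}_{ps,g}$ is representable by algebraic stacks, and deduce algebraicity and the diagonal properties from there. The difference is purely one of packaging. The paper identifies $X_{g,r}$ as the open substack of $\mathit{CurveMaps}(BGL_r)$ lying over $\mathit{Curves}_{ps,g}\subset\mathit{Curves}$ and then invokes \cite[Proposition~3.6]{dJHS11} as a black box for both the representability of $F$ and the diagonal properties; you instead unpack that black box in the special case $Y=BGL_r$, recognizing the fiber as $\mathrm{Bun}_r(\mathcal{C}/S)$ and sketching the Quot/\'etale-descent construction together with an explicit $\mathrm{Isom}$-scheme analysis of the diagonal. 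What the paper's route buys is brevity and the option to cite a result already stated in the needed generality; what yours buys is a self-contained argument that makes visible exactly where properness of $\pi$ and flatness of the bundles enter. One small imprecision: the $\mathrm{Isom}$ scheme is not literally an open in the total space of $(\pi_T)_*\mathcal{H}om$, since that pushforward need not commute with base change; the correct formulation is that the $\mathrm{Hom}$ functor is represented by a linear (hence affine, finitely presented) $T$-scheme via cohomology-and-base-change arguments, with $\mathrm{Isom}$ open inside it. This does not affect the conclusion.
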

\begin{proof}
We observe that the moduli stack of vector bundles of rank $r$ on curves over $k$ is just the stack $\mathit{CurveMaps}(BGL_r)$ of \cite[Definition 3.5]{dJHS11}. There is a natural map $F:\mathit{CurveMaps}(BGL_r)\ra \mathit{Curves}$, which by \cite[Proposition 3.6]{dJHS11} is representable by algebraic stacks.
 Then, since $\mathit{Curves}_{ps,g}$ is an open algebraic substack of $\mathit{Curves}$, the fiber product $\mathit{CurveMaps}(BGL_r) \times_{\mathit{Curves}} \mathit{Curves}_{ps,g}$, which is $X_{g,r}$ is an algebraic stack. Then, since $\mathit{CurveMaps}(BGL_r)$ has locally finitely presented, separated diagonal, we have its open substack $X_{g,r}$ has the same properties.
\end{proof}
We have a decomposition $X_{g,r}=\bigsqcup_{d\in \ZZ} X_{g,r,d}$, where $X_{g,r,d}$ is the moduli stack of pairs $(C,E)$ with $E$ of rank $r$ and degree $d$. Each $X_{g,r,d}$ is of course an algebraic stack with locally finitely presented, separated diagonal, but more is true.
\begin{proposition}
The forgetful map $F:X_{g,r,d}\ra \mathit{Curves}_{ps,g}$ is quasi-separated and $X_{g,r,d}$ is quasi-separated.
\end{proposition}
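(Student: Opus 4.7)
The plan is to first reduce the quasi-separatedness of $X_{g,r,d}$ to that of $F$, and then verify the latter by a direct analysis of the relative diagonal $\Delta_F$. For the reduction, the paper has noted that $\mathit{Curves}_{ps,g}$ is quasi-separated over $\Spec(k)$; hence, once we know $F$ is quasi-separated, the composition $X_{g,r,d}\ra \mathit{Curves}_{ps,g}\ra \Spec(k)$ is a composition of quasi-separated morphisms, giving us that $X_{g,r,d}$ itself is quasi-separated.

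For $F$ to be quasi-separated, I would need to verify that
\[
\Delta_F\colon X_{g,r,d}\ra X_{g,r,d}\times_{\mathit{Curves}_{ps,g}} X_{g,r,d}
\]
is quasi-compact and quasi-separated. An $S$-valued point of the target is the data $(\pi\colon\OC\ra S, E, E')$ of a family of prestable genus $g$ curves together with two rank-$r$, degree-$d$ vector bundles on $\OC$, and the pullback of $\Delta_F$ along this point is precisely the Isom functor $\underline{\mathrm{Isom}}_{\OC/S}(E, E')$, which sits inside the corresponding Hom functor $\underline{\mathrm{Hom}}_{\OC/S}(E,E')$ as an open subfunctor.

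The central step is then to invoke the standard representability theorem for Hom functors: since $\pi$ is proper and flat and $E, E'$ are locally free, the functor $\underline{\mathrm{Hom}}_{\OC/S}(E,E')$ is represented by a linear scheme of finite type over $S$ (associated to a coherent derived pushforward of $\mathcal{H}\!om_{\OC}(E,E')$). Consequently $\underline{\mathrm{Isom}}_{\OC/S}(E,E')$ is of finite type and separated over $S$, which gives both the quasi-compactness and the quasi-separatedness of $\Delta_F$. The only mild obstacle I anticipate is verifying Hom-scheme representability in the setting where $\OC$ is an algebraic space over $S$ rather than a scheme, but this is covered by the algebraic-space version of Grothendieck's theorem, whose hypotheses (properness and flatness of $\pi$ together with local freeness of $E,E'$) all hold in our setup.
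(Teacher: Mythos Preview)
Your overall strategy matches the paper's: reduce quasi-separatedness of $X_{g,r,d}$ to that of $F$ using the quasi-separatedness of $\mathit{Curves}_{ps,g}$, and then check that the pullback of $\Delta_F$ along an affine test scheme~$S$, namely $\underline{\mathrm{Isom}}_{\OC/S}(E,E')$, is quasi-compact. The paper does this in one stroke by invoking \cite[Proposition 08K9]{Stacks}, which shows directly that $\underline{\mathrm{Isom}}_{\OC/S}(E,E')$ is represented by an \emph{affine} scheme over $S$ (this reference also handles the algebraic-space issue you flagged).

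Your route via the Hom scheme is fine in spirit, but there is a small gap in the last step. From ``$\underline{\mathrm{Isom}}$ is open in a finite-type linear scheme over $S$'' you cannot immediately conclude that $\underline{\mathrm{Isom}}$ is of finite type: an open subscheme of a quasi-compact scheme need not be quasi-compact when $S$ is not assumed Noetherian. To close this, you should observe that for locally free $E,E'$ of the same rank the Isom locus is the nonvanishing locus of the determinant, hence a principal open in the affine scheme $\underline{\mathrm{Hom}}_{\OC/S}(E,E')$ and therefore itself affine over $S$; alternatively, just cite the affineness of Isom directly as the paper does. With that adjustment your argument is complete and essentially coincides with the paper's.
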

\begin{proof}
We must show that the diagonal map $\Delta:X_{g,r,d}\ra X_{g,r,d}\times_{\mathit{Curves}_{ps,g}}X_{g,r,d}$ is quasi-compact. The quasi-compactness of $\Delta$ is equivalent to the statement that for any family of prestable genus g curves $\pi:\OC\ra U$ over an affine base $U$ and any pair of rank $r$ degree $d$ vector bundles $E_1$, $E_2$ on $\OC$ the scheme $\mathrm{Isom}_U(E_1,E_2)$ is quasi-compact. But $\pi$ is finitely presented and $E_1$, $E_2$ are flat over U, so by \cite[Proposition 08K9]{Stacks}, $\mathrm{Isom}_U(E_1,E_2)$ is affine and hence quasi-compact.

We have that $\mathit{Curves}_{ps,g}$ is quasi-separated, so $X_{g,r,d}$ is quasi-separated because $F$ is.
\end{proof}

This result allows us to give a modified form of Question \ref{basicQuestion}.
\begin{question}\label{stackyQuestion}
Let $p$ be a $k$-point of $X_{g,r,d}$ whose associated pair $(C,E)$ has automorphism group $G$. Let $Z_p$ be the substack $p/G$ of $X_{g,r,d}$ corresponding to this point. Which points $Z_{p'}$ are in the closure of $Z_p$?
\end{question}
Question \ref{stackyQuestion} has one advantage over Question \ref{basicQuestion}, in that it is clear that if $Z_{p''}$ is in the closure of $Z_{p'}$ and $Z_{p'}$ is in the closure of $Z_p$, then $Z_{p''}$ is in the closure of $Z_p$. But its connection to Question \ref{basicQuestion} is not obvious, and is provided by the following result.
\begin{lemma}\label{technicalSpecializationLemma}
Let $(C,E)$ be a pair associated to a $k$-point $p$ of $X_{g,r,d}$. If the point $p'$ corresponding to the pair $(C',E')$ is in the closure of $Z_p$, then there is a flat family of curves $\OC\ra B$ over an irreducible affine base curve $B\ni 0$ and vector bundle $\mathcal{E}$ on $\OC$ with fiber $(C',E')$ over $0$ and fiber $(C,E)$ over any nonzero $k$-point of $B$.
\end{lemma}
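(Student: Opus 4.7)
The plan is to reduce the problem to one about schemes by passing to a smooth atlas of $X_{g,r,d}$, construct the desired curve there by a hyperplane-cutting argument, and then pull back the universal pair. First, I choose a smooth surjective morphism $f\colon U\to X_{g,r,d}$ with $U$ a scheme locally of finite type over $k$, which exists because $X_{g,r,d}$ is algebraic. Since $k$ is algebraically closed, a smooth surjection of algebraic stacks of finite type is surjective on $k$-points, so I can lift $p$ and $p'$ to $k$-points $u,u'\in U(k)$ with $f(u)=p$ and $f(u')=p'$. Set $V:=|f|^{-1}(Z_p)\subset |U|$; since $Z_p$ is a locally closed substack (using that $X_{g,r,d}$ has separated diagonal), $V$ is a locally closed subset of $U$.

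Because $f$ is smooth, the continuous map $|f|\colon |U|\to |X_{g,r,d}|$ is open, and a standard topological argument then yields $|f|^{-1}(\overline{Z_p})=\overline{V}$, so $u'\in\overline{V}$. Let $W$ be an irreducible component of $\overline{V}$ through $u'$; since $V$ is open in $\overline{V}$, the intersection $V\cap W$ is a nonempty open subset of $W$ containing its generic point. I would then restrict to an affine open neighborhood of $u'$ in $W$ and repeatedly intersect with general hyperplane sections through $u'$, producing an irreducible affine curve $T\subset W$ through $u'$ not contained in the proper closed subset $W\setminus V$; the generic point of $T$ therefore lies in $V$.

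Finally, let $\tilde{T}\to T$ be the normalization, and let $0\in \tilde{T}$ be a preimage of $u'$. Since the generic point of $\tilde{T}$ maps into $V$, only finitely many closed points of $\tilde{T}$ map into $W\setminus V$; removing all such points other than $0$ yields an irreducible smooth affine curve $B\subset\tilde{T}$. Pulling back the universal curve and vector bundle on $X_{g,r,d}$ along $B\to U\to X_{g,r,d}$ gives the family $\mathcal{C}\to B$ with vector bundle $\mathcal{E}$: the fiber over $0$ is $(C',E')$ by construction, and over any other $k$-point $b\in B$ the classifying map $\mathrm{Spec}(k)\to X_{g,r,d}$ factors through $Z_p=[\mathrm{Spec}(k)/G]$, which has a unique $k$-point up to isomorphism, so $(\mathcal{C}_b,\mathcal{E}_b)\cong (C,E)$. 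The main subtlety is the hyperplane-cutting step: the choices of $W$ and of the successive hyperplanes must be general enough that $T$ meets $V$ in a dense open subset of $T$, so that after normalizing and shrinking, exactly one closed point of $B$ ends up mapping outside $Z_p$.
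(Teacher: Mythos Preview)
Your proposal follows essentially the same route as the paper: pass to a smooth atlas $U$, identify the preimage $V\subset U$ of $Z_p$, show that a lift $u'$ of $p'$ lies in $\overline V$ (using openness of the atlas map on topological spaces), and then cut down to an irreducible affine curve through $u'$ that is generically contained in $V$.

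The one point of divergence is how you justify that $V$ is tractable. You assert that $Z_p$ is a locally closed substack ``using that $X_{g,r,d}$ has separated diagonal,'' but separatedness of the diagonal alone does not force residual gerbes at $k$-points to be locally closed; the correct input is more delicate. The paper sidesteps this: rather than claiming local closedness, it realizes $V$ as the image in $U$ of a finitely presented morphism $\tilde F\to U$, where $\tilde F$ is a scheme \'etale over the fiber product $U\times_{X_{g,r,d}}\Spec(k)$ and finite presentation comes from the \emph{locally finitely presented} diagonal established earlier. Chevalley's theorem then gives that $V$ is constructible. Constructibility suffices for the curve-cutting step, since a constructible subset dense in an irreducible variety contains a dense open subset of it. With that adjustment to your justification, your argument and the paper's coincide.
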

\begin{proof}
Let $p$ be a $k$-point of $X_{g,r,d}$ and given a smooth atlas $V\ra X_{g,r,d}$, let $U\subset V$ be a finitely presented affine open subscheme containing some preimage $\tilde{p}$ of $p$. Suppose $p$ is in the closure of some other $k$-point $q$. We construct the fiber product.
\[
\begin{tikzcd}
F\arrow[r] \arrow[d]& U\arrow[d]\\
q\arrow[r] &X_{g,r,d}
\end{tikzcd}.
\]
Without further assumptions, $F$ may be an algebraic space, so let $\tilde{F}\ra F$ be an \'etale cover by a scheme, and let $f:\tilde{F}\ra U$ be the resulting composition. Since $p$ is in the closure of $q$, a lift $\tilde{p}$ of $p$ in $U$ will be in the closure of the image of $f$. Moreover, $f$ is finitely presented since $X_{g,r,d}$ has finitely presented diagonal. So $f$ has constructible image in $U$. $\tilde{p}$ is in the closure of this image, and in particular there is some integral locally closed subscheme $Z$ of $U$ contained in the image of $f$ such that we have $\tilde{p}\in \overline{Z}$. Then the map $\overline{Z}\ra X_{g,r,d}$ corresponds to a family of vector bundles on a family of curves $\mathcal{C}_{\overline{Z}}\ra \overline Z$ with fiber over a $k$-point of $Z$ the pair $q$ and fiber over $\tilde{p}$ the pair $p$. We have that $\overline{Z}$ is an integral affine scheme of finite type over a field, that is, an affine variety. Then there is some affine curve $B$ in $\overline{Z}$ whose set-theoretic intersection with $\overline{Z}\setminus Z$ is exactly $\tilde{p}$. The family $\mathcal{C}\ra B$ associated to $B$ satisfies all the properties in the proposition.
\end{proof}
Lemma \ref{technicalSpecializationLemma} is a key step in producing compositions of specializations. We will apply it to prove the following result.
\begin{proposition}\label{specializationsCompose}
If $(\PP^1,E_1)$ specializes to $(\PP^1,E_2)$ and $(\PP^1,E_2)$ specializes to $(C, E_3)$ in the sense of Question \ref{basicQuestion}, then $(\PP^1,E_1)$ specializes to $(C,E_3)$.
\end{proposition}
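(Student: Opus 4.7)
The plan is to translate the two hypothesized specializations into closure relations in $|X_{g,r,d}|$, chain them using transitivity of closure, and then invoke Lemma \ref{technicalSpecializationLemma} to convert the combined closure relation back into a specialization in the sense of Question \ref{basicQuestion}. Write $p_1, p_2, p_3$ for the $k$-points of $X_{g,r,d}$ associated to $(\PP^1, E_1)$, $(\PP^1, E_2)$, and $(C, E_3)$. A specialization $(\PP^1, E_1) \rightsquigarrow (\PP^1, E_2)$ in the sense of Question \ref{basicQuestion} produces a morphism $\Delta \to X_{g,r,d}$ whose closed point is $p_2$ and whose generic point, by the defining condition on $\mathcal{E}|_{\OC_\eta}$, has the same image as $p_1$ in the topological space $|X_{g,r,d}|$. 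Hence $|Z_{p_2}| \subset \overline{|Z_{p_1}|}$.

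The same argument applied to the second specialization yields $|Z_{p_3}| \subset \overline{|Z_{p_2}|}$, and transitivity of closure in $|X_{g,r,d}|$ immediately gives $|Z_{p_3}| \subset \overline{|Z_{p_1}|}$, so $p_3$ lies in the closure of $Z_{p_1}$. Lemma \ref{technicalSpecializationLemma} then produces an irreducible affine curve $B \ni 0$ together with a flat family $\OC_B \ra B$ and a vector bundle $\mathcal{E}_B$ on $\OC_B$ whose fiber at $0$ is $(C, E_3)$ and whose fiber over every other $k$-point of $B$ is $(\PP^1, E_1)$. Replacing $B$ by its normalization and picking a preimage of $0$, I may assume $B$ is smooth at $0$; the completion at $0$ is then $\Spec k[[t]] = \Delta$, and pulling the family back along the completion map yields a flat family over $\Delta$ with special fiber $(C, E_3)$.

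The step I expect to be the main obstacle is verifying that the generic fiber of this family over $\Delta$ is genuinely isomorphic to $(\PP^1, E_1) \otimes_k K$ as required by Question \ref{basicQuestion}, rather than merely having all $k$-specializations of that form. To handle this, I will observe that the restriction $B \setminus \{0\} \ra X_{g,r,d}$ factors through the residual gerbe $Z_{p_1}$, which is the classifying stack of $G := \mathrm{Aut}(\PP^1, E_1)$, and so the family on $B \setminus \{0\}$ is classified by a $G$-torsor. Any such torsor becomes trivial after pulling back along a suitable étale neighborhood of $0$, and pulling the family from Lemma \ref{technicalSpecializationLemma} back along such a neighborhood produces a new family satisfying the same conclusions but whose restriction to the complement of $0$ is isomorphic to the constant family $(\PP^1, E_1) \times (B \setminus \{0\})$. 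The generic fiber of the associated family over $\Delta$ is then manifestly $(\PP^1, E_1) \otimes_k K$, completing the proof.
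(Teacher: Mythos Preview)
Your overall strategy matches the paper's exactly: convert each specialization into a closure relation in $|X_{g,r,d}|$, chain them by transitivity, invoke Lemma~\ref{technicalSpecializationLemma} to obtain a family over an irreducible affine curve $B$, then normalize and complete at $0$. The only divergence is in how you identify the generic fiber over $k((t))$, which you correctly flag as the nontrivial point.

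The paper handles that step more directly: it observes that $\OC\setminus\OC_0\to B\setminus\{0\}$ is an \'etale-locally trivial $\PP^1$-bundle, hence Zariski-locally trivial by Tsen's theorem, and then uses upper semicontinuity of $h^0$ on fibers (together with Grothendieck splitting over the function field) to conclude that the splitting type of $\mathcal{E}$ on the generic fiber equals that of $E_1$. Your route through the residual gerbe is correct in outline, but the assertion that the resulting $G$-torsor on $B\setminus\{0\}$ ``becomes trivial after pulling back along a suitable \'etale neighborhood of $0$'' is not automatic and is left unjustified: for a general smooth affine group $G$, a torsor on $B\setminus\{0\}$ need not extend across $0$, so an \'etale neighborhood of $0$ has no a priori reason to trivialize it. What makes the claim true here is precisely the structure of $G=\mathrm{Aut}(\PP^1,E_1)$: the quotient $G\to PGL_2$ has trivial $H^1$ over a curve over an algebraically closed field by Tsen, and the kernel $\mathrm{Aut}(E_1)$ is an extension of a product of $GL_n$'s by a unipotent group, each with trivial $H^1$ on a smooth affine curve over $k$. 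So the torsor is in fact already trivial on $B\setminus\{0\}$, but establishing this repackages the paper's Tsen-plus-semicontinuity argument in a less transparent form. Your approach would generalize to any pair whose automorphism group has vanishing $H^1$ over affine curves; the paper's approach is shorter and avoids the stack-theoretic detour.
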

\begin{proof}
From the specialization hypotheses, we have that $(C,E_3)$ is in the closure of $(\PP^1,E_1)$ in $X_{0,r,d}$. Then there exists an affine curve $B$ with marked point $0$, a family of curves $\pi:\OC\ra B$, and a vector bundle $\mathcal{E}$ on $\OC$ such that $\OC_0\cong C$, $\OC_p\cong \PP^1_k$ for $k$-points $p\neq 0$ of $B$, and $\mathcal{E}\vert_{\OC_0}\cong E_3$ and $\mathcal{E}\vert_{\OC_p}\cong E_1$ for all $p\neq 0$. By replacing $B$ with a normalization, we may moreover assume $B$ is regular, so the formal neighborhood of 0 in $B$ is isomorphic to $\Spec(k[[t]])$.

The map $\pi:\OC\setminus \OC_0\ra B\setminus 0$ is an \'etale-locally trivial $\PP^1$ bundle, which is Zariski-locally trivial by Tsen's theorem. And the restriction of $\mathcal{E}$ to each fiber over a nonzero closed point $p$ of $b$ is isomorphic to $E_1$. Semicontinuity then gives that the restriction of $\mathcal{E}$ to the generic fiber of $\OC\ra B$ is $E_1 \times_{k} K_B$. Restricting to a formal neighborhood of $0$ in $B$ gives the desired specialization.
\end{proof}
\subsection{Specialization of line bundles}
The only specializations we will construct directly in this paper are specializations of line bundles. These specializations will be constructed using the following.

\begin{proposition}\label{lineBundlesSpecialize}
Let $\pi:\OC\ra \Delta$ be a flat family of trees of rational curves with $\Delta$ the spectrum of a DVR $R$ over $k$ with generic point $\eta$ and closed point $p$. Suppose $C_\eta \cong \PP^1_{K_R}$. If $\OC$ is smooth, then a line bundle $L$ on $C_p$ can be expressed as the degeneration of a line bundle $\OO(d)$ on $\OC_\eta$ if and only if $\deg(L)=d$.
\end{proposition}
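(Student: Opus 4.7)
The ``only if'' direction is immediate: for a line bundle $\mathcal{L}$ on $\OC$, the Euler characteristic $\chi(\mathcal{L}|_{\OC_t})$ is constant in $t$ by flatness of $\OC \to \Delta$, so the restrictions to different fibers have the same degree. For the ``if'' direction, my plan is to first extend $\OO(d)$ from $\OC_\eta$ to \emph{some} line bundle $\mathcal{L}_0$ on $\OC$, then modify $\mathcal{L}_0$ by a line bundle supported on $C_p$ until its restriction to $C_p$ realizes the prescribed multidegree of $L$.

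For the first step, since $\OC_\eta \cong \PP^1_{K_R}$ trivially admits a $K_R$-rational point, I take the scheme-theoretic closure of such a point in $\OC$; by the valuative criterion applied to the proper morphism $\OC \to \Delta$, this closure is isomorphic to $\Delta$. Because $\OC$ is regular of dimension two, the closure is a Cartier divisor $D$, and $\mathcal{L}_0 := \OO_\OC(dD)$ restricts to $\OO(d)$ on $\OC_\eta$ and to a line bundle of some multidegree $(d_1^0,\ldots,d_n^0)$ on the components $C_1,\ldots,C_n$ of $C_p$, with $\sum d_i^0 = d$.

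For the adjustment step, I need to realize the difference $(d_i - d_i^0)$ as the multidegree on $C_p$ of a line bundle on $\OC$ that is trivial on $\OC_\eta$. By regularity of $\OC$, such line bundles are generated by $\OO_\OC(C_k)$ via the excision sequence
\[
\bigoplus_k \ZZ \cdot [C_k] \to \mathrm{Pic}(\OC) \to \mathrm{Pic}(\OC_\eta) \to 0,
\]
so the task reduces to showing that the map $M\colon \ZZ^n \to \ZZ^n$ sending $(a_k)$ to $\bigl(\sum_k a_k\, C_k\cdot C_i\bigr)_i$ surjects onto the sublattice $\{e \in \ZZ^n : \sum e_i = 0\}$. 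Containment is automatic since $C_p = \sum C_k$ is principal (cut out by a uniformizer of $R$), forcing $(1,\ldots,1)\in\ker M$.

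Surjectivity is the only point where real work is required. For each edge $(i,j)$ of the dual tree of $C_p$, my approach is to delete that edge and let $T$ be the connected component of the resulting forest containing $j$; I will then verify that the vertical divisor $D_{ij} := \sum_{k \in T} C_k$ satisfies $\OO_\OC(D_{ij})|_{C_p}$ has multidegree $e_i - e_j$. This is a direct computation using that $\OC$ smooth makes $C_k \cdot C_l$ equal the number of nodes shared by $C_k$ and $C_l$ (so $0$ or $1$ for $k\neq l$), and that $C_k^2 = -v_k$ where $v_k$ is the valence of $C_k$ in the dual tree (from $C_k \cdot C_p = 0$). Since a tree on $n$ vertices has $n-1$ edges yielding linearly independent vectors $e_i - e_j$, these vectors span the rank $n-1$ lattice $\{e : \sum e_i = 0\}$, finishing the proof. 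The main risk is sign-bookkeeping in the intersection computation, but the tree hypothesis keeps it tractable by allowing induction on the leaves.
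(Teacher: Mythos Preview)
Your proof is correct and follows essentially the same route as the paper: extend $\OO(d)$ to $\OC$ via the closure of a $K_R$-point, then adjust by vertical divisors, with the key construction being exactly the ``half-tree'' divisors $D_{ij}=\sum_{k\in T}C_k$ (the paper writes this as $\OO\bigl(\sum_{C''' \text{ on } C'' \text{ side}} C'''\bigr)$) whose restriction has multidegree $e_i-e_j$. Your presentation is slightly more formal in invoking the excision sequence and the intersection matrix $M$, but the substance is identical.
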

\begin{proof}
A line bundle $\mathcal{L}$ on $\OC$ must have the same degree over each of the fibers of $\pi$, so the requirement $\deg(L)=d$ is necessary. Suppose $\deg(L)=d$.  The closure in $\OC$ of a $K_R$ point $p\in\OC_\eta$ is a divisor of degree 1 on the fibers of $\pi$. Letting $L_1$ be the line bundle associated to this divisor, by replacing $L$ with $L\otimes (L_1\vert_{C_p})^{\otimes -d}$ and twisting $\OO(d)$ down to $\OO$, we may assume that we want to specialize $\OO_{C_\eta}$ to some degree 0 line bundle $L$. $L$ is determined by the degree of its restrictions to each $\PP^1$ contained in $C_p$, and the sum of these degrees is 0. Moreover, since $\OC$ is smooth, the Weil divisor on $\OC$ given by the class of a smooth subcurve $C'$ of $C_p$ is Cartier, and the associated line bundle $\OO(C')$ has degree $1$ on each $C''$ intersecting $C'$ and degree $-e$ on $C'$, where $e$ is the number of irreducible curves intersecting $C'$ in $C_p$. We observe then that given any two intersecting smooth subcurves $C'$ and $C''$ on $C_p$, there is a line bundle of degree $1$ on $C'$ and $-1$ on $C''$, given by 
\[
\OO(\sum_{C''' \text{ on }C''\text{ side of } C'\cap C''} C''').
\]
Restrictions of these line bundles clearly generate all line bundles of degree 0 on $C_p$, because $C_p$ is connected. The result follows.
\end{proof}

\section{Proof of Theorem \ref{treesMain}}\label{treesMainTheoremProof}
In this section, we prove Theorem \ref{treesMain}. Naively, we might hope that the specializations we desire on a family $\OC$ may be produced by iterated extensions of line bundles constructed using Proposition \ref{lineBundlesSpecialize}. As demonstrated in Example \ref{keyExample}, this is optimistic in general; though we hope that $\OO(1)\oplus \OO(3)$ specializes to $E$ in the example, $E$ evidently has no line subbundles of degree 3, and so there is no hope of describing the desired vector bundle on a family as an extension of line bundles.

Instead, we will show that such an extension can be constructed if $C$ is replaced by an \emph{enlargement} $C'$, defined as a surjective map of trees of smooth rational curves $f:C'\ra C$ such that the restriction of $f$ to any irreducible component of $C'$ is an isomorphism or a constant map. Given an enlargement $C'$ of $C$ and a vector bundle $E$ on $C$, we will also call the pair $(C',f^*(E))$ an enlargement of $(C,E)$.

Allowing for enlargements will turn out to eliminate the obstruction present in Example \ref{keyExample}. The following lemma presents a way to construct a maximal-degree line subbundle on an enlargement of $(C,E)$.
\begin{lemma}\label{treesMainLemma}
Let $C$ be a tree of smooth rational curves and $E$ a vector bundle on $C$ such that $H^0(C,E(L))\neq 0$ for all line bundles $L$ of degree 0 on $C$, but $H^0(C,E(L))=0$ for some line bundle $L$ of degree $-1$. Then there is an enlargement $(C',E')$ of $(C,E)$ such that $E'$ has a line subbundle of degree at least 0.
\end{lemma}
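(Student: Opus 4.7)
The plan is to construct the desired line subbundle on an enlargement from a carefully chosen global section of some degree-zero twist of $E$. When $C \cong \PP^1$, the statement is immediate from Birkhoff-Grothendieck: write $E \cong \bigoplus_j \OO(d_j)$; the hypothesis $h^0(E) \geq 1$ forces $\max_j d_j \geq 0$ while $h^0(E(-1)) = 0$ forces $\max_j d_j \leq 0$. Hence $\max_j d_j = 0$ and $\OO$ is a direct summand of $E$, serving as a line subbundle of degree $0$ with no enlargement needed.

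In general, I first treat the case in which we can find $L \in \mathrm{Pic}^0(C)$ and $s \in H^0(E \otimes L)$ that does not vanish identically on any irreducible component of $C$. On each such $C_i$, the map $L^{-1}|_{C_i} \to E|_{C_i}$ induced by $s|_{C_i}$ is injective as a map of sheaves, and saturating it yields a line subbundle $F_i \subseteq E|_{C_i}$ with $\deg F_i = -\deg L|_{C_i} + \deg V(s|_{C_i})$. To glue the $F_i$ across the nodes, I would construct an enlargement $f : C' \to C$ by inserting a new rational component $E_p$ at each node $p$ of $C$ at which $s(p) = 0$, leaving other nodes alone. On each inserted $E_p$ the pullback $f^*E$ is trivial of rank $r$, and I choose an inclusion $\OO_{E_p}(-1) \hookrightarrow \OO_{E_p}^r$ whose fibers at the two new nodes match the adjacent $F_i|_p$ and $F_j|_p$ inside $E|_p$; this is possible because the evaluation map $\mathrm{Hom}(\OO(-1),\OO^r) \to E|_p \oplus E|_p$ is an isomorphism. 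At an un-inserted node both adjacent $F_i|_p$ already coincide with $\langle s(p) \rangle \subseteq E|_p$, so gluing yields a line subbundle $F \subseteq f^*E$ of total degree
\[
\deg F \;=\; \sum_i \deg V(s|_{C_i}) \;-\; \#\{\text{inserted components}\},
\]
where I have used $\sum_i \deg L|_{C_i} = 0$. Since each vanishing node of $s$ contributes at least $+1$ to $\deg V(s|_{C_i})$ for each of its two adjacent components while the associated inserted $E_p$ costs only $-1$, the right-hand side is bounded below by $\#\{\text{vanishing nodes}\} + \#\{\text{smooth zeros of } s\} \geq 0$.

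To produce such a good section I would exploit the second hypothesis: fix $L_0 \in \mathrm{Pic}^{-1}(C)$ with $h^0(E \otimes L_0) = 0$. For any smooth point $q \in C$, the twist $E \otimes L_0(q)$ has degree $0$ and so admits a nonzero section by the first hypothesis, which must be nonzero at $q$ (else it would descend to $H^0(E \otimes L_0)$). Choosing $q$ in each component in turn and combining the resulting sections, using that a finite union of proper subspaces of a vector space over the infinite field $k$ cannot cover it, one aims to produce a section of some single $E \otimes L$ that is non-vanishing on every component. The main obstacle is exactly this combining step: the sections produced live in different cohomology groups indexed by different twists, and bridging this gap will likely require an induction on the number of components of $C$, or a sub-recursion applied to the subcurve on which a candidate section vanishes identically, before the degree bookkeeping and local construction on the inserted $\PP^1$s above can be applied.
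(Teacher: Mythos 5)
Your base case is fine, and your construction in the case where some degree-zero twist $E\otimes L$ has a section $s$ not vanishing identically on any component is essentially sound: saturating $s$ on each component and joining the resulting line subbundles across the bad nodes by inserting a $\PP^1$ carrying $\OO(-1)$ does yield a line subbundle of nonnegative degree, since each vanishing node contributes at least $+2$ to $\sum_i \deg V(s\vert_{C_i})$ against the $-1$ cost of the insertion. The genuine gap is exactly the one you flag at the end: such a section need not exist, and producing a substitute for it is the entire content of the lemma. Concretely, take $C$ with two components and $E=\OO(0,-1)\oplus\OO(-2,1)$ (a twist of the bundle of Example \ref{keyExample}). One checks that $h^0(C,E(a,-a))>0$ for every $a$ while $h^0(C,E(0,-1))=0$, so the hypotheses hold; but for every $a$ each summand of $E(a,-a)$ has all of its sections vanishing identically on one and the same component, so no degree-zero twist admits a section that is nonzero on both components. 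Your mechanism for proving the relevant vanishing subspaces are proper also breaks down here: the section of $E\otimes L_0(q)$ nonvanishing at $q$ lives in a twist whose multidegree depends on which component $q$ lies on, so the sections you produce for different components cannot be placed in a single $H^0(C,E\otimes L)$ and the union-of-proper-subspaces argument has nothing to apply to.

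The paper closes this gap with a combinatorial induction on the number of components that your sketch does not contain. After twisting by the degree $-1$ bundle $L_{-1}$ killing all sections, it introduces the set $S$ of coconnected subtrees $C_1$ on which every degree-zero twist of the restriction has a section, proves a dichotomy (for every coconnected subtree, either it or its complement lies in $S$, using the hypothesis applied to a carefully built degree-zero-plus-one twist and a Mayer--Vietoris sequence at the separating node), and then runs a walk along components that either turns around, producing two complementary subtrees in $S$ each handled by the inductive hypothesis and joined by an inserted $\OO(-1)$, or terminates at a component $C_i$, producing a section on a subtree (your good-section situation) whose identically-vanishing locus consists of subtrees in $S$, again handled by induction. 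To complete your proof you would need to formulate and prove some version of this dichotomy and recursion; as written, the argument establishes the lemma only in the special case where a globally nonvanishing-on-every-component section happens to exist.
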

\begin{proof}
In what follows, a \emph{coconnected subtree} of $C$ is defined to be a union $C_1$ of irreducible components in $C$ such that $C_1$ and the complement of $C_1$ in $C$ is connected. 

We proceed by induction on the number of irreducible components of $C$. If $C$ has one component, then $C\cong \PP^1$ and the lemma is trivial. Otherwise, let $L_{-1}$ be a line bundle on $C$ of degree $-1$ such that $E\otimes L_{-1}$ has no global sections. Set $E':=E\otimes L_{-1}$. Let $S$ be the set of coconnected subtrees $C_1$ of $C$ such that $H^0(C_1,E'\vert_{C_1}\otimes L_0)\neq 0$ for all line bundles $L_0$ of degree 0 on $C_1$. 

We first show that for any coconnected subtree $C_1$ of $C$, either $C_1$ or its complement $C_2:=\overline{C\setminus C_1}$ is in $S$. We proceed by contradiction. Suppose that both $C_1$ and $C_2$ are not in $S$.  Let $L_1$ and $L_2$ be degree 0 line bundles on each of $C_1$ and $C_2$ such that $H^0(C_1,E'\vert_{C_1}\otimes L_1)=H^0(C_2,E'\vert_{C_2}\otimes L_2)=0$, let $L_0$ be the line bundle on $C$ that restricts to $L_1$ and $L_2$ on $C_1$ and $C_2$ respectively, and let $L$ be the line bundle that has degree 1 on the irreducible component of $C_1$ that contains $C_1\cap C_2$, and otherwise has degree 0. Then we have an exact sequence
\[
0\ra H^0(C_1,E'\vert_{C_1}\otimes L_1)\ra H^0(C,E'(L_0\otimes L))\ra H^0(C_2,E'\vert_{C_2}\otimes L_2)
\]
and $H^0(C,E'(L_0\otimes L))=0$, contradicting our hypothesis on $E$. So either $C_1$ or $C_2$ must be in $S$.

Now let $C_1$ be an irreducible component of $C$. Let $C_2$ be an irreducible component of $C$ intersecting $C_1$ such that the coconnected subtree of $C$ containing $C_1$ but not $C_2$ is in $S$, if such a curve exists. Repeat to produce $C_3$, $C_4$ and so on. 

One of the following cases then occur.
\begin{enumerate}
    \item This process never terminates. In this case, we must have that for some $i$ this process ``turns around'' after producing $C_i$, in the sense that $C_{i-1}=C_{i+1}$.  We then have that both the coconnected subtree $C_1$ containing $C_{i-1}$ but not $C_i$ and its complement $C_2:=\overline{C\setminus C_1}$ are in $S$.
    \item This process terminates at some $C_i$. Then we have that every connected component of $\overline{C\setminus C_i}$ is in $S$.
\end{enumerate}
In the first case, noting  $C\notin S$ by the definition of $E'$, possibly after performing suitable enlargements on $C_1$ and $C_2$ we have that by the inductive hypothesis there  are line subbundles $L_1$ and $L_2$ of $E'\vert_{C_1}$ and $E'\vert_{ C_2 }$ respectively, each of degree at least 0. 
Enlarge $C$ by inserting a $\PP^1$ at the intersection $C_1\cap C_2$; call the new pair $(\tilde{C},\tilde{E'})$. Then there is a degree at least $-1$ line subbundle of $\tilde{E'}$ that restricts to $L_1$ and $L_2$ on $C_1$ and $C_2$ respectively, and restricts to $\OO(-1)$ on the curve between them. After twisting by the enlargement of $L_{-1}^*$, this produces a degree at least 0 line subbundle of $\tilde{E}$.

In the second case, let $L$ be the line bundle of degree 1 on $C_i$ and degree 0 otherwise. Then $E'(L)$ has a global section $s$ which can only vanish on a union of coconnected subtrees $C_1\cup \ldots \cup C_k$ of $\overline{(C\setminus C_i)}$. If $k=0$, we are done, so suppose $k\geq 1$. Let $C'$ be the subtree of $C$ on which $s$ is generally nonvanishing--we have that $s$, by vanishing at $k$ points of $C'$, gives a line subbundle $L'$ of degree at least $k$ of $E'(L)\vert_{C'}$. Moreover, by the inductive hypothesis and the fact that every $C_j$ is in $S$, there are nonnegative degree line subbundles $L_j$ of $E'\vert_{C_j}$ for each $j$, Enlarge $C$ to a tree $\tilde{C}$ by inserting a $\PP^1$ at each point $C_j\cap C'$. There is a line subbundle $\tilde{L}$ of $\tilde{E}$ on $\tilde{C}$ that restricts to $L_j$ on each $C_j$, restricts to $L'$ on $C'$, and restricts to $\OO(-1)$ on each new $\PP^1$. Then $\tilde{L}$ is a line subbundle of $\tilde{E}$ of nonnegative degree.
\end{proof}
\begin{remark}
In fact, under the conditions of the lemma above the line subbundle $L$ must have degree exactly 0. For if $L$ has positive degree, all degree $-1$ twists of it would have sections, so $E'$ would have sections.
\end{remark}
\begin{example}
Let $C$ be a tree of two smooth rational curves intersecting in a node, and let $E:=\OO(2,0)\oplus \OO(0,2)$ be as in Example \ref{keyExample}. We have that every degree $-3$ twist of $E$ has global sections, but $E':=E\otimes \OO(-2,-2)\cong \OO(0,-2)\oplus \OO(-2,0)$ has no global sections. 
 The set $S$ associated to $E'$ in the proof of Lemma \ref{treesMainLemma} then contains every coconnected subtree of $C$ except $C$ itself; so case (1) of the proof applies. Enlarging $C$ by inserting a smooth rational curve at the node, we then have that the line bundle of degree 0 on the two original components of $C$ and degree $-1$ on the new component is a line subbundle of the enlargement of $E'$, as we wanted. 
\end{example}
\begin{lemma}\label{extComputation}
Let $\pi:\OC\ra \Delta$ be a family of trees of smooth rational curves over $\Delta$ an affine $k$-schme, and let $\mathcal{E}$ and $\mathcal{G}$ be vector bundles on $\OC$. Let $p$ be a $k$-point of $\Delta$, let $C_p$ be the fiber of $\pi$ over $p$, and let $F$ be a vector bundle on $C_p$ given as an extension
\begin{equation}\label{plhldr}
0\ra E\ra F\ra G\ra 0,
\end{equation}
where $E=\mathcal{E}\vert_{C_p}$ and $G=\mathcal{G}\vert_{C_p}$. Then there exists an extension of vector bundles on $\OC$,
\[
0\ra \mathcal{E}\ra \mathcal{F}\ra \mathcal{G}\ra 0,
\]
that restricts to (\ref{plhldr}) on $C_p$.
\end{lemma}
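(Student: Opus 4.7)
The plan is to reinterpret the problem as a cohomological lifting statement and apply standard vanishing. An extension $0\to\mathcal{E}\to\mathcal{F}\to\mathcal{G}\to 0$ of vector bundles on $\mathcal{C}$ is classified by an element of $\operatorname{Ext}^1_{\mathcal{C}}(\mathcal{G},\mathcal{E})=H^1(\mathcal{C},\mathcal{H})$ where $\mathcal{H}:=\mathcal{H}om(\mathcal{G},\mathcal{E})$, since $\mathcal{G}$ is locally free. Likewise, the given extension (\ref{plhldr}) corresponds to a class $\xi_p\in H^1(C_p,\mathcal{H}|_{C_p})$, using that $\mathcal{H}|_{C_p}=\mathcal{H}om(G,E)$ because taking $\mathcal{H}om$ into a locally free sheaf commutes with restriction. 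It therefore suffices to show that the restriction map $H^1(\mathcal{C},\mathcal{H})\to H^1(C_p,\mathcal{H}|_{C_p})$ is surjective, since any lift $\xi$ determines an extension $\mathcal{F}$ whose restriction to $C_p$ is classified by $\xi_p$, and hence is isomorphic to $F$ as an extension.

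To prove the surjectivity, let $\mathcal{I}_p$ be the ideal sheaf of $C_p$ in $\mathcal{C}$ (pulled back from the ideal of $p$ in $\Delta$) and consider the short exact sequence
\[
0\to\mathcal{I}_p\mathcal{H}\to\mathcal{H}\to\mathcal{H}|_{C_p}\to 0.
\]
Its long exact sequence reduces the surjectivity of restriction to the vanishing $H^2(\mathcal{C},\mathcal{I}_p\mathcal{H})=0$. This in turn follows from two standard vanishings: because $\pi$ is proper and flat with one-dimensional fibers, $R^i\pi_*\mathcal{F}=0$ for all quasi-coherent $\mathcal{F}$ and all $i\geq 2$; and because $\Delta$ is affine, $H^j(\Delta,R^i\pi_*\mathcal{F})=0$ for $j\geq 1$. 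The Leray spectral sequence then gives $H^i(\mathcal{C},\mathcal{F})=0$ for every quasi-coherent $\mathcal{F}$ and every $i\geq 2$. Applying this to $\mathcal{F}=\mathcal{I}_p\mathcal{H}$ produces the needed vanishing.

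Finally, given a lift $\xi\in H^1(\mathcal{C},\mathcal{H})$, one must verify that the corresponding extension $0\to\mathcal{E}\to\mathcal{F}\to\mathcal{G}\to 0$ has $\mathcal{F}$ locally free, which is immediate since local splittings of the sequence (provided by local freeness of $\mathcal{G}$) exhibit $\mathcal{F}$ as locally a direct sum of $\mathcal{E}$ and $\mathcal{G}$. The only conceivable obstacle in this approach is the vanishing of $H^2(\mathcal{C},\mathcal{I}_p\mathcal{H})$; but here the dimension-and-affineness argument above is routine once one recalls that $\mathcal{C}$ need only be an algebraic space for Grothendieck's relative vanishing to apply, so no serious technical difficulty arises.
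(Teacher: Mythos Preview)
Your proof is correct and follows essentially the same route as the paper: identify $\operatorname{Ext}^1$ with $H^1$ of the $\mathcal{H}om$ bundle, and deduce surjectivity of restriction from the vanishing of $H^2(\mathcal{C},\mathcal{I}_p\mathcal{H})$, which holds because $\pi$ has one-dimensional fibers over an affine base. You spell out the Leray argument and the local freeness of $\mathcal{F}$ a bit more explicitly than the paper does, but there is no substantive difference.
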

\begin{proof}
We must show that the map
\[
\mathrm{res}:\mathrm{Ext}^1(\mathcal{G},\mathcal{E})\ra \mathrm{Ext}^1(G,E)
\]
given by restricting vector bundles to $C_p$ is surjective. To see this, we use the diagram
\[
\begin{tikzcd}
\mathrm{Ext}^1(\mathcal{G},\mathcal{E}) \arrow[r, "\mathrm{res}"] \arrow[d, "\cong"]
&\mathrm{Ext}^1(G,E) \arrow[d,"\cong"]\\
H^1(\mathcal{C},\mathcal{G}^*\otimes \mathcal{E}) \arrow[r]& H^1(C,G^*\otimes E),
\end{tikzcd}
\]
where the bottom map is the map on cohomology coming from the restriction of $\mathcal{G}^*\otimes \mathcal{E}$ to $C_p$. This diagram is commutative
, and the bottom map is a surjection because $H^2(\mathcal{C}, \mathcal{G}^*\otimes \mathcal{E}\otimes \mathcal{I}_p)=0$ since $\pi$ has fiber dimension 1 over an affine base.
\end{proof}
\begin{remark}
This lemma gives no control over what the restriction of $\mathcal{F}$ to $\pi^{-1}(\Delta\setminus p)$ looks like. In practice, when we use the lemma, we will assume that there are no nonsplit extensions of $\mathcal{G}$ by $\mathcal{E}$ on $\pi^{-1}(\Delta\setminus p)$, so the restriction of $\mathcal{F}$ to $\pi^{-1}(\Delta\setminus p)$ will be
\[
\mathcal{E}\vert_{\pi^{-1}(\Delta\setminus p)}\oplus \mathcal{G}\vert_{\pi^{-1}(\Delta\setminus p)}.
\]
\end{remark}

We now have every tool we need to prove Theorem \ref{treesMain}. It will be a corollary of the following result.
\begin{theorem}\label{treesMainEnlarged}
Suppose $E$ is a vector bundle on $C$ and $E'$ a vector bundle on $\PP^1$  such that $E$ and $E'$ have the same degree and rank and the semicontinuity condition \ref{semicontinuity} holds for every line bundle $L$ on $C$. Then there exists an enlargement $f:C'\ra C$ of $C$, a specialization $E''$ of $E'$ on $\PP^1$, a flat family $\mathcal{C}\ra \Delta$ over $\Delta=\Spec(k[[t]])$ with regular total space, general fiber isomorphic to $\PP^1_{k((t))}$, and special fiber isomorphic to $C'$, and a vector bundle $\mathcal{E}$ on $\mathcal{C}$ that restricts to $f^*(E)$ on the special fiber and to $E''$ on the generic fiber.
\end{theorem}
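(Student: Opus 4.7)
The plan is to proceed by induction on the rank $r$ of $E$. For the base case $r=1$, every tree of smooth rational curves $C$ admits a regular smoothing $\mathcal{C} \to \Delta$ with generic fiber $\PP^1_K$, constructed by iteratively blowing up a trivial $\PP^1_\Delta$ family at points of its special fiber, creating one new component per blowup. Proposition \ref{lineBundlesSpecialize} then supplies the required line bundle.

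For the inductive step, I would first pick out a large-degree line subbundle of $E$ on a suitable enlargement. Let $d^*$ be the largest integer such that $h^0(C, E \otimes M) > 0$ for every line bundle $M$ on $C$ of degree $-d^*$; this is finite (for $M$ concentrated very negatively on a single component, $h^0$ vanishes), and semicontinuity at $\deg M = -d_1'$, where $d_1'$ is the top entry of the splitting type of $E'$, forces $d^* \geq d_1'$. Applying Lemma \ref{treesMainLemma} to $E$ twisted by a degree $-d^*$ line bundle produces an enlargement $g: \tilde C \to C$ together with a line subbundle $L \subset g^* E$ of degree exactly $d^*$ (the remark after the lemma pins down the degree). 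Set $Q := g^* E / L$, a rank $r-1$, degree $d - d^*$ bundle on $\tilde C$.

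The main step, and the principal technical obstacle, is to produce a bundle $E'_Q$ on $\PP^1$ of rank $r-1$ and degree $d-d^*$ satisfying both (a) the semicontinuity hypothesis of Theorem \ref{treesMainEnlarged} with $(\tilde C, Q)$, and (b) the condition that $\OO(d^*) \oplus E'_Q$ is a specialization of $E'$ on $\PP^1$ in the ordering of Theorem 14.7 of \cite{EH16}. A natural candidate is the bundle whose splitting type is obtained from that of $E'$ by moving $d^* - d_1'$ units of degree from the second entry into the first, which ensures (b) with room to spare in the partial-sum ordering. For (a), the cohomology long exact sequence from $0 \to L \otimes M \to g^* E \otimes M \to Q \otimes M \to 0$ combined with the semicontinuity on $C$—transferred to pullback line bundles $M$ on $\tilde C$ via the projection formula, using $g_* \OO_{\tilde C} = \OO_C$ and $R^1 g_* \OO_{\tilde C} = 0$—gives the required inequality for pullback $M$; non-pullback line bundles on $\tilde C$ then have to be handled using the trivial restriction of $g^* E$ to each contracted $\PP^1$ together with the maximality of $d^*$.

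Granting such an $E'_Q$, the inductive hypothesis applied to $(\tilde C, Q)$ and $E'_Q$ yields a further enlargement $h : C' \to \tilde C$, a specialization $E''_Q$ of $E'_Q$, a regular family $\mathcal{C}' \to \Delta$ with $\mathcal{C}'_0 \cong C'$ and generic fiber $\PP^1_K$, and a vector bundle $\mathcal{Q}$ on $\mathcal{C}'$ restricting to $h^* Q$ and $E''_Q$. Proposition \ref{lineBundlesSpecialize} supplies a line bundle $\mathcal{L}$ on $\mathcal{C}'$ restricting to $h^* L$ on the special fiber and to $\OO(d^*)$ on the generic fiber, and Lemma \ref{extComputation} lifts the special-fiber extension $0 \to h^* L \to (h \circ g)^* E \to h^* Q \to 0$ to an extension $0 \to \mathcal{L} \to \mathcal{E} \to \mathcal{Q} \to 0$ on $\mathcal{C}'$. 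By tracking through the induction that the top splitting entry of $E''_Q$ stays bounded above by $d^*+1$, one gets $\mathrm{Ext}^1(E''_Q, \OO(d^*)) = 0$ on $\PP^1_K$, so the generic fiber of $\mathcal{E}$ splits as $E'' := \OO(d^*) \oplus E''_Q$. The chain $E' \leadsto \OO(d^*) \oplus E'_Q \leadsto \OO(d^*) \oplus E''_Q = E''$ of specializations on $\PP^1$ then exhibits $E''$ as a specialization of $E'$, and $f := h \circ g : C' \to C$ is the enlargement required by the theorem.
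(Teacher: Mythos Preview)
Your overall architecture matches the paper's proof exactly: induct on rank, use Lemma \ref{treesMainLemma} to produce a degree-$d^*$ line subbundle on an enlargement, apply the inductive hypothesis to the quotient, and reassemble via Lemma \ref{extComputation} and Proposition \ref{lineBundlesSpecialize}. The base case and the final gluing step are handled just as in the paper.

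The gap is precisely where you flag it: the choice of $E'_Q$. Your candidate ``shift $d^*-d_1'$ units from the second splitting entry into the first'' does not in general produce a bundle for which semicontinuity of $Q$ can be verified. For instance, if $E'=\OO(3)^{\oplus 3}$ and $d^*=5$, your recipe gives $\OO(d^*)\oplus E'_Q$ with $E'_Q=\OO(3)\oplus\OO(1)$, whereas the correct comparison bundle is $\OO(2)\oplus\OO(2)$; the former is a strict specialization of the latter, so asking $Q$ to dominate it is a strictly stronger condition, and nothing in the hypotheses on $E$ supplies it. Your sketched verification via the long exact sequence runs into the problem that $h^0(L\otimes M)$ can exceed $\deg(L\otimes M)+1$ on a reducible curve, so the bound $h^0(Q\otimes M)\geq h^0(f^*E\otimes M)-h^0(L\otimes M)$ does not yield what you need.

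The paper's fix is to define $E''$ on $\PP^1$ by the formula
\[
h^0(\PP^1,E''(e))=\max\bigl(h^0(\PP^1,\OO(d^*+e)),\,h^0(\PP^1,E'(e))\bigr),
\]
i.e.\ the most generic specialization of $E'$ containing $\OO(d^*)$ as a summand, and to set $Q'=E''/\OO(d^*)$. This choice gives $h^0(Q'(e))=\max(0,\,h^0(E'(e))-d^*-e-1)$, and that expression is exactly what one can bound $h^0(Q\otimes M)$ below by using the hypotheses. With this $Q'$ the semicontinuity check for the quotient becomes a one-line consequence of the semicontinuity for $E$, and the rest of your argument (which coincides with the paper's) goes through.

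A smaller point: your appeal to ``tracking through the induction'' to get $\mathrm{Ext}^1(E''_Q,\OO(d^*))=0$ is also not straightforward, since specialization can raise the top splitting entry. The paper sidesteps this because its $Q'$ already has all summands of degree at most $d^*$; one can alternatively observe that any extension of $Q''$ by $\OO(d^*)$ is itself a specialization of $\OO(d^*)\oplus Q'=E''$, so the splitting is not strictly needed.
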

\begin{proof}
We first note that given any tree of smooth rational curves $C'$, it is easy to construct a family of trees of smooth rational curves $\pi:\OC\ra \Spec(k[[t]])$ with generic fiber $\PP^1$ and special fiber $C'$ such that $\OC$ is regular. Starting with the trivial family $\PP^1\times \Spec(k[[t]])$, the desired family can be constructed by iteratively blowing up points on the central fiber. 

To construct the vector bundle $\mathcal{E}$ on this regular total space $\OC$, we proceed by induction on the rank of $E$. For $E$ a line bundle, the result follows from Proposition \ref{lineBundlesSpecialize}. Now suppose that $E$ and $E'$ are vector bundles of rank at least 2 satisfying the hypotheses of the theorem. Suppose $d$ is maximal such that $H^0(C,E(L_{-d}))\neq 0$ for all line bundles $L_{-d}$ of degree $-d$ on $C$. Then by Lemma \ref{treesMainLemma},
 possibly after replacing $C$ with an enlargement $f:C'\ra C$, the vector bundle $f^*(E)$ has a line subbundle $L_d$ of degree $d$, while by semicontinuity no direct summand of $E'$ has degree greater than $d$. Let  $E''$ be the unique vector bundle that satisfies
\[
h^0(\PP^1,E''(e))=\max(h^0(\PP^1, \OO(d+e)), h^0(\PP^1, E'(e)))
\]
for all $e$. Clearly, $E''$ is a specialization of $E'$. Moreover, $E''$ satisfies the semicontinuity property (\ref{semicontinuity}) with respect to $f^*(E)$ for all choices of line bundles, because for any line bundle $L$ we have $h^0(C, E\otimes L)\geq h^0(\PP^1, E'(\deg L))$ by hypothesis, and we have the chain of inequalities
\[
h^0(C', f^*(E)\otimes L)\geq h^0(C', L_d\otimes L)\geq h^0(\PP^1, \OO(d+\deg L))
\]
that are satisfied for any $L$.  We now show that some specialization of $E''$ on $\PP^1$ specializes to an enlargement of $f^*(E)$. Let $Q':=E''/\OO(d)$. We have that $Q'$ has the same degree and rank as $Q:=f^*(E)/L_d$, and we have
\[
h^0(\PP^1, Q'(e))=\twopartdef{h^0(\PP^1, E''(e))-d-1-e}{ e\geq -d-1}{0}{e<-d-1},
\]
so $Q$ satisfies the semicontinuity condition (\ref{semicontinuity}) with respect to $Q'$ for all line bundles. So by the induction hypothesis there is an enlargement $f':C''\ra C'$ and a vector bundle $\mathcal{Q}$ on a smoothing $\mathcal{C}\ra \Delta$ of $C''$ that restricts to some specialization $Q''$ of $Q'$ on the general fiber and $Q$ on the special fiber. Let $\mathcal{L}_d$ be the line bundle on $\OC$ that restricts to  $f'^*(L_d)$ on the special fiber. By Lemma \ref{extComputation}, there is then a vector bundle $\mathcal{E}$ on $\mathcal{C}$ that is given by an exact sequence
\[
0\ra \mathcal{L}_d\ra \mathcal{E}\ra \mathcal{Q}\ra 0
\]
and restricts to the exact sequence
\[
0\ra f'^*(L_d)\ra f'^*(f^*(E))\ra f'^*(Q)\ra 0
\]
on $C''$. Such an extension automatically restricts to $\OO(d)\oplus Q''$ on the general fiber, because there are no nontrivial extensions of $Q''$ by $\OO(d)$ on $\PP^1$. $\OO(d)\oplus Q''$ is a specialization of $E''$ and therefore of $E$, so $\mathcal{E}$ is the vector bundle giving the desired specialization. 
\end{proof}
\begin{proof}[Proof of Theorem \ref{treesMain}]
By Theorem \ref{treesMainEnlarged}, we have that there exists a specialization $E''$ of $E'$ on $\PP^1$, an enlargement $(\tilde{C},\tilde{E})$ of $(C,E)$, a flat family $\tilde{\mathcal{C}}\ra \Spec(k[[t]])$ with regular total space, and a vector bundle $\tilde{\mathcal{E}}$ on $\tilde{\mathcal{C}}$ such that $C_\eta\cong \PP^1_{k((t))}$, $C_p\cong \tilde{C}$, and the restriction of $\tilde{\mathcal{E}}$ to $C_\eta$ and $C_p$ is $E''$ and $\tilde{E}$ respectively.  The enlargement map $f:\tilde{C}\ra C$ extends to a blow-down map $\phi:\tilde{\OC}\ra \OC$ which contracts every $\PP^1$ subtree contracted by $f$. Moreover $\tilde{E}$ is trivial on the components of $\tilde{C}$ contracted by $f$, so likewise $\tilde{\mathcal{E}}$ is trivial on such components. Then the direct image $\phi_*(\tilde{\mathcal{E}})$ is a vector bundle on $\OC$ giving a specialization from $E''$ to $E$. Then, by Proposition \ref{specializationsCompose}, we have that there exists a family of trees of smooth rational curves $\pi:\OC\ra \Delta$  and a vector bundle $\mathcal{E}$ on $\OC$ that restricts to $(\PP^1,E')$ on the generic fiber and to $(C,E)$ on the special fiber, as we were to show.

\end{proof}
\bibliographystyle{plain}
\bibliography{squirt}
\end{document}